\newcommand{\mab}[1]{\vspace{.1cm}

\noindent {\bf #1} }
\newtheorem{definition}{Definition}
\newtheorem{theorem}{Theorem}
\newtheorem{lemma}{Lemma}
\newtheorem{proposition}[theorem]{Proposition}
\xdef\@endgadget#1{{\unskip\nobreak\hfil\penalty50\hskip1em\hbox{}\nobreak\hfil#1\parfillskip=0pt\finalhyphendemerits=0\par}}
\newcommand\@Endofsymbol{$\triangledown$}
\newcommand\Endofremark{\@endgadget{\@Endofsymbol}}
\newcommand{\R}{\mathbb{R}}
\newcommand{\cL}{\mathcal{L}}
\newcommand{\ddt}{\frac{d}{dt}}
\newcommand{\ddtk}{\frac{d^{k}}{dt^{k}}}
\newcommand{\ddtkp}{\frac{d^{k+1}}{dt^{k+1}}}
\renewcommand{\i}{$(i)$ }
\newcommand{\ii}{$(ii)$ }
\newcommand{\iii}{$(iii)$ }
\DeclareMathOperator{\ad}{ad}
\newcommand{\Span}{\operatorname{span}}
\newcounter{cAss}
\newcounter{cAssSaved}
\newcommand\Ass[1]{\ensuremath{\boldsymbol{\mathcal A}_{\text{\hspace{0.75pt}\bf#1}}}}
\newlength\asswidth
\definecolor{forestgreen}{rgb}{0.13, 0.55, 0.13}
\definecolor{orange}{rgb}{1,0.49,0}
\newcounter{para}[section]
\begin{document}

\title{Canonical forms for polynomial systems with balanced super-linearizations}
\author{Mohamed-Ali Belabbas
}
\date{}\maketitle
\begin{abstract}
A system is Koopman super-linearizable if it admits a finite-dimensional embedding as a linear system. Super-linearization is used  to leverage methods from linear systems theory to design controllers or observers for nonlinear systems. We call a super-linearization balanced if the degrees of the hidden observables do not exceed the ones of the visible observables. We show that systems admitting such super-linearization can be put in a simple canonical form via a linear change of variables.\end{abstract}
\fussy

\section{Introduction}
In the early 1930s,~\cite{carleman1932application} and ~\cite{koopman1931hamiltonian,kowalski1991nonlinear} introduced the idea of linearizing a system dynamics by embedding it in an infinite-dimensional vector space. The additional degrees of freedom allow to linearize the system without recourse to changes of variables. These embeddings are still actively studied a century later. A problem that remains open is whether such an embedding into a finite-dimensional space exists; this is called a super-linearization. Said otherwise, can a system dynamics be linearized by the addition of a finite set of observables? In this paper, we address this problem by deriving canonical forms for super-linearizable systems with polynomial vector fields.

 The notion of embedding of systems as linear systems has found early applications in nonlinear control~\cite{brockett1976volterra, brockett2014early}, and there has been a renewed interest spurred in part by data-driven methods in control~\cite{mauroy2020koopman, otto2021koopman}. Other approaches to linearize control system dynamics  rely on utilizing transformation groups acting on the system without recourse to embeddings in higher dimensional state-spaces, viz, via diffeomorphisms or feedback and diffeomorphisms---the so-called feedback linearization. In this vein, we cite the seminal works of Brockett~\cite{brockett1978feedback}, as well as Isidori and Krener~\cite{isidori1982feedback} and Jacubzyck and Respondek~\cite{jacubczyk1980linearization}.
 
  We consider here the controlled system 
\begin{equation}\label{eq:mainsys}
\dot x = f(x)+ug(x),	
\end{equation}
where $x \in \R^n$, $f,g$ are smooth vector fields in $\R^n$.  This system admits a linear finite-dimensional embedding, or {\em super-linearization} or finite-dimensional Koopman linearization, if there exists $m\geq 0$ functions, called  observables, which when adjoined to the original system would permit its linearization. A typical example~\cite{brunton2016koopman} is the following two-dimensional system
\begin{equation}\label{eq:ex1}
\begin{cases}
\dot x &= -x+y^2 +u \\
\dot y &= -y	
\end{cases}
\end{equation}
Adding the observable $w:=y^2$, whose total time derivative is given by $\dot w = 2y\dot y = -2y^2 =-2w$, we obtain the three-dimensional {\em linear} system
\begin{equation}\label{eq:ex2}
\begin{cases}
\dot x &= -x+w +u \\
\dot y &= -y\\
\dot w &=-2w.	
\end{cases}
\end{equation}
Define the projection map $\Pi(x,y,w):=(x,y)$. We call $\Pi$ a {\em standard projection} and~\eqref{eq:ex2} a linear {\em finite-dimensional embedding} with {\em observable} $p:(x,y) \mapsto y^2$. We precisely define these notions below.  We see that solutions of~\eqref{eq:ex2} with initial conditions $(x_0,y_0,y_0^2)$ are mapped by $\Pi$ to solutions of~\eqref{eq:ex1} with initial conditions $(x_0,y_0)$. 

We make the following informal observations about this example:

\begin{enumerate}
\item The nonlinear part of the vector field {\em depends solely} on the $y$-variable.
\item The nonlinear part of the vector field {\em does not} affect the $y$-variable.
\item The control term in~\eqref{eq:ex1} does not affect the $y$-variable.
\end{enumerate}
We see that the dynamics of $y$ is linear and autonomous; in fact~\eqref{eq:ex1} is equivalent to the time-varying linear system

$$\dot x = -x+e^{-2t}y_0^2 +u.$$

In this paper, we provide a canonical form for  systems as the one in~\eqref{eq:ex1}  which admit a finite-dimensional Koopman linearization.  More precisely, we identify a class of systems for which  the observations just made   always hold true, and leave the study of the general case to a subsequent publication. In order to describe the class of systems, we rely on the notions of {\em visible} and of {\em hidden} observables introduced in~\cite{belabbasobs2022}. Roughly speaking, an observable is visible if it appears explicitly in the nonlinear dynamics that we seek to linearize, and it is hidden otherwise. A precise definition is given below. In the example~\eqref{eq:ex1}, the only observable is $y^2$, and it is thus visible. The class of systems we focus on is the class of polynomial systems admitting what we call a {\em balanced} super-linearization (see Definition~\ref{def:indepobs} below). For clarity of exposition, we also focus on the case of systems admitting a super-linearization with one visible observable, but with  an arbitrary number of hidden observables. The ideas developed here to obtain the canonical form  can however be applied to more general cases.

\section{Preliminaries} Throughout the paper
$$
(f,g) \mbox{ denotes the system }\dot x = f + u g.
$$ 
 We let $e^{t(f+ug)}x_0$ be the solution at time $t$ of~\eqref{eq:mainsys} with initial state $x_0$ and control $u$. 
Given positive integers $n$ and $m$, we denote by $\Pi:\R^{n+m} \to \R^n$ the standard projection onto the first $n$ variables and $\bar \Pi:\R^{n+m} \to \R^m$ the standard projection onto the last $m$ variables, i.e., 
\begin{equation*}
	\Pi(z_1,\ldots,z_{n+m})=(z_1,\ldots,z_n)\mbox{ and }\bar \Pi(z_1,\ldots,z_{n+m})=(z_{n+1},\ldots,z_{n+m}).
\end{equation*} 
The integers $n$ and $m$ will be clear from the context. 
With some abuse of notation, for $z \in \R^{n + m}$ we also set  $z_1:= \Pi(z) \in \R^n$ and $z_2:= \bar \Pi(z) \in \R^m$, so that $z= \begin{bmatrix} z_1^\top & z_2^\top \end{bmatrix}^\top .$ In order to simplify the notation, we also write $z=\begin{pmatrix} z_1 & z_2 \end{pmatrix}$, with the understanding that $z, z_1$ and $z_2$ are column vectors.
Given a map $p:\R^n \to \R^m$, we say that $p$ has a constant term if $p(0) \neq 0$ and that $p$ has a linear term if $\frac{d}{dx}|_{x=0}p(x) \neq 0$. 
 
We let
$$\iota(x):=\begin{pmatrix} x & p(x)\end{pmatrix},$$
where the function $p$ will be clear from context.  
Given a smooth map $\Pi$ between smooth manifolds, we denote by $d\Pi$ its Jacobian. Given smooth vector fields $f,g$, we denote the Lie derivative~\cite{isidori1985nonlinear} of $g$ along $f$ by $\ad_f g$, i.e.,
$$\ad_fg := [f,g] =\frac{\partial g}{\partial x} f-\frac{\partial f}{\partial x}g.$$ For $p:M \to \R^m$ a smooth (vector-valued) function, we define its directional derivative along $f$, also called Lie derivative,  as
$$\cL_f p = f \cdot p = \frac{\partial p}{\partial x} f.
$$
We recall the well-known identity
\begin{equation}\label{eq:propliebracket}
\cL_{[f,g]} p = f\cdot g \cdot p-g \cdot f \cdot p.	
\end{equation}

We let $I$ be the identity matrix whose dimension is determined by the context or explicitly indicated via an index. For a polynomial map $p:\R^n \to \R^m$, we denote by $\deg p$ its degree and $p^{[i]}$ its {\em homogeneous part} of degree $i$, so that 
$$p(x) = \sum_{i=0}^{\deg p} p^{[i]}(x). $$ If $i <0$, then $p^{[i]}=0$.

The following fact will be used repeatedly in the proofs, without making explicit mention of it: if $f$ is a polynomial vector field with terms of least degree $d_m$ and highest degree $d_M$ and $p$ a real-valued polynomial with terms of least degree $d_m'$ and highest degree $d_M'$ , then the non-zero terms of $\ad_fp$  are  of degree at least $d_m+d_m'-1$ and at most $d_M+d_M'-1$.  In particular, if $f$ is an affine polynomial, then the non-zero terms in $\ad_fp$ have degrees at least $d_m'$ and at most $d_M'$, similarly to $p$. 

An {\em affine control system} is a controlled differential equation of the form
\begin{equation}\label{eq:canonicalmainsys}\dot z = A_\ell z +B_\ell u + D_\ell,
\end{equation}
for a matrix $A_\ell$ and vectors $B_\ell,D_\ell$ of the appropriate dimensions for the equation to be well-defined.
We refer to the affine control system above as {\em the triple} $(A_\ell, B_\ell, D_\ell)$.

We denote by $C(A,B)$ and $O(A,G)$ the controllability and observability matrices, respectively, associated with the system
\begin{equation}\label{eq:assoclinsys}
\begin{cases}\dot x &= Ax+Bu\\  y&=Gx.
\end{cases}
\end{equation}

\section{Statement of the main results} 

We first recall the definition of super-linearization. 

\begin{definition}[Super-linearization]\label{def:pifeedbackequiv}We say that the system $(f, g)$ in $\R^n$ is super-linearized to $(A_\ell,B_\ell, D_\ell)$  with $A_\ell \in \R^{(m+n)\times(m+n)}$ and $B_\ell, D_\ell \in \R^{m+n}$ if there exists a smooth map $p:\R^n \to \R^{m}$   so that for all $x_0 \in \R^n$, it holds that
\begin{equation}\label{eq:equivalence1}
\Pi\left(e^{t(A_\ell z+B_\ell u +D_\ell)}z_0 \right) = e^{t(f+ug)}x_0	\mbox{ with } z_0=\iota(x_0)
\end{equation} 
We call the functions $p_j:\R^n \to \R$, $j=1,\ldots,m$,  the {\em observables} and the data of $\cL:=(A_\ell, B_\ell, D_\ell,p)$ an  finite-dimensional  embedding as an affine system. 
\end{definition}
\noindent The definition can be summarized in the commutative diagram below 
\begin{center}
\begin{tikzcd}[column sep=huge, row sep=huge]
  \R^n \arrow[r, "e^{t(f+ug)}"] \arrow[d,  "(I \,\ p)" left ] & \R^{n} \\
  \R^{n+m} \arrow[r,  "e^{t(A_\ell z+B_\ell u+D_\ell)}" below] & \R^{n+m} \arrow[u,  "\Pi" right]
  \end{tikzcd}
\end{center}

In order to state the main result of this paper, we recall some concepts and results from~\cite{belabbasobs2022} that are needed. Given a super-linearization of a system $(f,g)$ to $(A_\ell, B_\ell, D_\ell,p)$ with $m$ observables, we partition $A_\ell, B_\ell, D_\ell$ as
\begin{equation}\label{eq:partabell}
A_\ell = \begin{bmatrix} A & G \\ H & M \end{bmatrix}, B_\ell = \begin{bmatrix} B \\ C
\end{bmatrix}\mbox{ and } 
D_\ell = \begin{bmatrix}
D \\ E
\end{bmatrix}
\end{equation}
where $A \in \R^{n \times n}, G\in \R^{n \times m}, H\in \R^{m \times n}$ and $M\in \R^{m \times m}$ partition $A_\ell$,  $B \in \R^{n}, C \in \R^m$  partition $B_\ell$ and $D \in \R^{n}, E \in \R^m$ partition of $D_\ell$.

We start with the following statement, which is found in~\cite[Theorem 1, Corollary 2]{belabbasobs2022}

\sloppy
\begin{theorem}[Th.~1, Cor.~2 in~\cite{belabbasobs2022}]\label{th:recmain1cor2}
Assume that the system $(f,g)$ is super-linearized to $(A_\ell,B_\ell, D_\ell)$ with observables $p:\R^n \to \R^m$. Then it holds that
\begin{enumerate}
\item The system $(f,g)$  is of the form
\begin{equation}\label{eq:canon1}\dot x = A x +G p + B u+D.\end{equation}
 \item The observables 	$p$ satisfy the pair of partial differential equations
\begin{equation}\label{eq:master1}
\left\lbrace\begin{aligned}&	G\frac{\partial p}{\partial x}(Ax+Gp(x)+D)=G\left(Hx+Mp(x)+E\right) \\
&G\frac{\partial p}{\partial x}B=GC
\end{aligned}
\right.	
\end{equation}
\item  Denoting by  $x(t)$ the solution of~\eqref{eq:mainsys}, by $z(t)$ is the solution of~\eqref{eq:canonicalmainsys} with $z(0)=\iota(x_0)$ and letting $z_2(t)=\bar \Pi z(t)$, we have 
\begin{equation}\label{eq:GpGz}
Gp(x(t))=Gz_2(t).
\end{equation}

\end{enumerate}

\end{theorem}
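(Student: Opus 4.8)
The plan is to extract all three claims from the single trajectory identity furnished by the commutative diagram, by differentiating in $t$ and separating the $u$-dependence. Write the solution of the linear system with $z(0)=\iota(x_0)=(x_0,p(x_0))$ as $z(t)=(z_1(t),z_2(t))$, so that the partition in \eqref{eq:partabell} gives the component dynamics $\dot z_1 = Az_1+Gz_2+Bu+D$ and $\dot z_2 = Hz_1+Mz_2+Cu+E$. The content of \eqref{eq:equivalence1} is precisely that $z_1(t)=x(t)$ for all $t$, where $x(t)=e^{t(f+ug)}x_0$ is the nonlinear solution; this identity is the engine for everything that follows.

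For part 1, I would differentiate $z_1(t)=x(t)$ at $t=0$. The left side gives $Az_1(0)+Gz_2(0)+Bu+D = Ax_0+Gp(x_0)+Bu+D$, using $z_1(0)=x_0$ and $z_2(0)=p(x_0)$, while the right side gives $f(x_0)+u\,g(x_0)$. Since the equality holds for every $x_0$ and every (constant) control value $u$, I match the $u$-independent and $u$-linear parts separately to conclude $g\equiv B$ and $f(x)=Ax+Gp(x)+D$, which is exactly \eqref{eq:canon1}.

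Part 3 then follows with no further differentiation. Along the trajectory, $z_1(t)=x(t)$ forces $\dot z_1(t)=\dot x(t)$ for all $t$. Expanding the left side via the linear dynamics and the right side via part 1, namely $\dot x = Ax+Gp(x)+D+uB$, the terms $Ax$, $D$ and $Bu$ cancel and leave $Gz_2(t)=Gp(x(t))$, which is \eqref{eq:GpGz}.

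For part 2, I differentiate the identity of part 3 in $t$ and evaluate at $t=0$: using the component dynamics and the chain rule this reads $G(Hx_0+Mz_2(0)+Cu+E)=G\frac{\partial p}{\partial x}(x_0)(Ax_0+Gp(x_0)+D+uB)$. Here lies the one delicate point, which is the crux of the argument: for general $t$ one only knows $Gz_2(t)=Gp(x(t))$, so $GMz_2(t)$ need \emph{not} equal $GMp(x(t))$; it is only at $t=0$ that the initial condition $z_2(0)=p(x_0)$ holds exactly, permitting the substitution $Mz_2(0)=Mp(x_0)$. Since $x_0$ is arbitrary it becomes a free variable $x$, and separating the $u$-independent and $u$-linear parts yields the two equations of \eqref{eq:master1}. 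I expect the main obstacle to be conceptual rather than computational, namely recognizing that the hidden coordinate agrees with $p$ only after applying $G$ and only exactly at the initial time, so that the pointwise partial differential equations must be read off at $t=0$ and then propagated in $x$ by varying the initial condition.
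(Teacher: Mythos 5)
Your proof is correct. Note, however, that the paper does not actually prove this statement: Theorem~\ref{th:recmain1cor2} is recalled from~\cite{belabbasobs2022} (Theorem~1 and Corollary~2 there), so there is no in-paper proof to compare against. That said, your derivation --- reading the definition of super-linearization as the trajectory identity $\Pi z(t)=x(t)$, differentiating in $t$, evaluating at $t=0$ where $z_2(0)=p(x_0)$ holds exactly, and separating the $u$-linear from the $u$-independent parts with $x_0$ ranging over $\R^n$ --- is precisely the computational pattern the paper itself employs in the base case of the proof of Proposition~\ref{prop:mainprop} (compare your part~2 with the passage from~\eqref{eq:ddtgp0} and~\eqref{eq:ddtgp1} to~\eqref{eq:equatin1} and~\eqref{eq:somezeros}), so your argument is fully in the spirit of the source. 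Your identification of the crux is also on target: since $\bar\Pi z(t)$ and $p(x(t))$ agree exactly only at $t=0$, while for $t>0$ only the weaker identity $Gz_2(t)=Gp(x(t))$ survives, the pointwise PDEs of~\eqref{eq:master1} must indeed be extracted at the initial time and then propagated by varying $x_0$; this is also exactly why item~3 of the theorem carries the factor $G$ on both sides rather than asserting $z_2(t)=p(x(t))$.
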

\fussy
Equation~\eqref{eq:GpGz} underscores the importance of the submatrix $G$ in the partition of $A_\ell$ given in~\eqref{eq:partabell}. We will refer to $G$ as the {\em $G$-matrix} of the super-linearization. It allows us, in particular, to define visible and hidden observables of a super-linearization (see~\cite{belabbasobs2022}).
 \sloppy
\begin{definition}[Visible and hidden observables]\label{def:vishidobs}
Let the triplet $(A_\ell, B_\ell, D_\ell)$ be a super-linearization of $(f,g)$ via $p:\R^n \to \R^m$ with $A_\ell$ partitioned as in~\eqref{eq:partabell}. Then  $p_j:\R^n \to \R$, $j=1,\ldots,m$, is a {\em visible observable} for the super-linearization if there exists $i \in \{1,\ldots, n\}$   so that $G_{ij} \neq 0$. Otherwise, $p_j$ is a {\em hidden observable}.	
\end{definition}
\fussy
Super-linearizations for systems, when they exist, are not unique and their number of observables can vary from one super-linearization to another. In~\cite{belabbasobs2022}, we exhibited a system invariant, namely the rank of $G$-matrices of super-linearizations in the so-called {\em reduced visible form.} More precisely, we call a super-linearization of $(f,g)$ to $(A_\ell,B_\ell,D_\ell)$ via $p$    in {\em reduced visible form}  if $p$ has no linear nor constant terms, and the visible observables  are {\em linearly independent}. 
We recall that the entries of $p:\R^n \to \R^m$ are said to be {\em linearly independent} if  for all $w \in \R^m$ 
$$w^\top p=0 \Leftrightarrow w=0.$$
We showed that systems admitting a super-linearization necessarily admit a super-linearization in reduced visible form, and that the ranks of the $G$-matrices of any such super-linearization are the same.
\sloppy
\begin{theorem}[Th.~3, Prop.~9 in~\cite{belabbasobs2022}]\label{th:rvf}
Assume  that $(f,g)$ admits a super-linearization. Then, it admits a super-linearization in {\em  reduced visible form}. Furthermore, all super-linearizations in reduced visible forms have $G$-matrices of the same rank.
\end{theorem}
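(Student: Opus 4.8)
The plan is to treat the two assertions separately: the existence of a reduced visible form is a normalization argument, whereas the equality of the ranks is the substantive claim and will be obtained by identifying $\rank G$ with a quantity that depends only on $(f,g)$.

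For existence, I would start from an arbitrary super-linearization $(A_\ell,B_\ell,D_\ell,p)$ and first strip off the constant and linear parts of the observables. Writing $p(x)=a+Lx+\hat p(x)$ with $a\in\R^m$ constant, $L\in\R^{m\times n}$, and $\hat p$ having neither constant nor linear term, the map $\Phi(z_1,z_2)=(z_1,\,z_2-a-Lz_1)$ is an invertible affine transformation of $\R^{n+m}$ that fixes the first $n$ coordinates, hence satisfies $\Pi\circ\Phi=\Pi$. Conjugating the affine control system by $\Phi$ again yields an affine control system with embedding $\Phi\circ\iota$, so it is still a super-linearization, and a direct computation shows that its $G$-block is unchanged (the upper-right block of the new $A_\ell$ is again $G$). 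This leaves only the requirement that the visible observables be linearly independent: after reordering so that $G=[\,G_V\ \ 0\,]$ and $p=(p_V,p_W)$, if the visible observables $p_V$ are linearly dependent I would pass to a basis $q$ of their span, write $p_V=Rq$, and rebuild a smaller extended linear system in the coordinates $(x,q,p_W)$. Since $Gp=G_Vp_V=(G_VR)q$, the visible dynamics closes in the new coordinates, and $q$ still has no affine terms; the remaining work is to verify that the other coordinates evolve linearly and that the reduced triple is genuinely a super-linearization.

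I expect this last reduction to be the main obstacle. The difficulty is that, by Theorem~\ref{th:recmain1cor2}(3), the extended state $z_2(t)$ tracks $p(x(t))$ only through the relation $Gz_2(t)=Gp(x(t))$, and not via the stronger identity $z_2(t)=p(x(t))$; consequently one cannot simply delete the coordinate attached to a redundant observable and assume the projection is preserved, and the linear closure of the reduced hidden dynamics must be established rather than inherited. This is where I would concentrate the effort, exploiting the invariance relation $Gz_2=Gp$ together with the master equations~\eqref{eq:master1}.

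For the rank invariance the argument is short once the form is in place. By Theorem~\ref{th:recmain1cor2}(1) every super-linearization gives $f(x)=Ax+Gp(x)+D$. In reduced visible form $p$ has no constant or linear term, so $Gp(x)$ consists only of terms of degree at least two; hence $Ax+D$ is exactly the affine part of $f$ and the nonaffine part $f_{\mathrm{nl}}:=f-(Ax+D)$ satisfies
\begin{equation*}
f_{\mathrm{nl}} = Gp = G_V p_V ,
\end{equation*}
since the hidden columns of $G$ vanish. The function $f_{\mathrm{nl}}$ depends only on $f$. The $i$-th component is $(f_{\mathrm{nl}})_i=\sum_j (G_V)_{ij}(p_V)_j$, i.e. the image of the $i$-th row of $G_V$ under the linear map $w\mapsto w^\top p_V$, which is injective because the visible observables $p_V$ are linearly independent. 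Therefore
\begin{equation*}
\rank G = \rank G_V = \dim \Span\{(f_{\mathrm{nl}})_1,\ldots,(f_{\mathrm{nl}})_n\},
\end{equation*}
and since the right-hand side is intrinsic to $(f,g)$, the $G$-matrices of all reduced visible forms have the same rank.
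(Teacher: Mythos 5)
A preliminary remark: the paper does not prove this theorem at all---it is imported verbatim from~\cite{belabbasobs2022} (Th.~3, Prop.~9), so there is no in-paper proof to compare against and your proposal has to stand on its own. Its second half does: the rank-invariance argument is correct and pleasantly self-contained \emph{given} the first assertion. In reduced visible form $p(0)=0$ and $\frac{d}{dx}|_{x=0}p=0$, so Theorem~\ref{th:recmain1cor2}(1) forces $D=f(0)$ and $A=\frac{\partial f}{\partial x}(0)$, hence $Gp=f-f(0)-\frac{\partial f}{\partial x}(0)x=:f_{\mathrm{nl}}$ is intrinsic to $f$ (this Taylor-expansion phrasing also covers the smooth, non-polynomial case, which the theorem allows). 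Writing $Gp=G_Vp_V$, noting $\rank G=\rank G_V$ because hidden columns vanish, and using that $w\mapsto w^\top p_V$ is injective---which is exactly the paper's definition of linear independence of the entries of $p_V$---you correctly get $\rank G=\dim\Span\{(f_{\mathrm{nl}})_1,\ldots,(f_{\mathrm{nl}})_n\}$, a quantity depending only on $f$. The affine-stripping step is also fine: $\Phi(z_1,z_2)=(z_1,z_2-a-Lz_1)$ is invertible, satisfies $\Pi\circ\Phi=\Pi$, and conjugation yields $\dot z_1=(A+GL)z_1+Gz_2+Bu+(D+Ga)$, so the $G$-block is indeed unchanged.

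The genuine gap is the one you flagged yourself: the reduction to linearly independent visible observables is a plan, not a proof, and it is the crux of the existence assertion. When the entries of $p_V$ are dependent one can normalize by some $S\in GL(m_v)$ so that the visible observables become $(q,0)$ with $q$ linearly independent, but the extended coordinate carrying the zero observable cannot simply be deleted, nor can its column of $G$ be zeroed out: along trajectories only $Gz_2(t)=Gp(x(t))$ holds, not $z_2(t)=p(x(t))$, so that coordinate may feed nontrivially into the dynamics of the remaining ones, and altering $G$ alters the flow of the whole extended system, not just its readout. Showing that a smaller extended system in the variables $(x,q,p_W)$ closes affinely, with $q$ still free of affine terms, and still projects onto the flow of $(f,g)$ is precisely the substance of Th.~3 of~\cite{belabbasobs2022}; without it the first assertion is unproven, and since your rank argument presupposes that reduced visible forms exist, the proposal as written establishes neither assertion unconditionally.
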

\fussy
We are now in a position to describe what is meant by {\em balanced} super-linearization of a polynomial system. 

\begin{definition}[Balanced super-linearization]\label{def:indepobs}
Consider the polynomial $(f,g)$. We say that it admits a {\em balanced} super-linearization $(A_\ell,B_\ell,D_\ell,p)$ if the degree of the hidden observables does not exceed the degree of the visible observables.
\end{definition}

The system~\eqref{eq:ex1} has a balanced super-linearization, since it does not have any  hidden observables.
In the sequel, we always assume that the system $(f,g)$ is super-linearized to $\cL$, where $\cL$ is balanced and in reduced visible form. The latter can be assumed, owing to Theorem~\ref{th:rvf}, without loss of generality. Putting this together with Theorem~\ref{th:recmain1cor2}, we can assume that if a polynomial system has a super-linearization, it has  a super-linearization with observables of degrees two or higher. The main result then states that if a polynomial system has a balanced super-linearization with a single visible observable, then there exists  a linear change of variables so that we can partition the state-variable in the basis, say $x'$, into $x_1'$ and $x_2'$, and the system in these variables is of the form

\begin{equation}\label{eq:canonicalformexplicit}
\frac{d}{dt} \begin{bmatrix}
 x'_1\\x'_2
 \end{bmatrix} = \begin{bmatrix} A'_{11} & A'_{12}\\
 0 &  A'_{22} 	
 \end{bmatrix}\begin{bmatrix}
 x'_1\\x'_2
 \end{bmatrix}+\begin{bmatrix}
p'(x_2)\\
0	
\end{bmatrix}
+u\begin{bmatrix}
 B'\\
0	
\end{bmatrix}+D'.
\end{equation}
Observe that only the dynamics of $x'_1$ is nonlinear, and that the nonlinearity solely depends on $x_2'$.
To simplify the notation, we will also assume  that the origin is an equilibrium, i.e., $f(0)=0$, and $D=0$. More precisely, we have

\begin{theorem}\label{th:canon}
Assume that the polynomial system $(f,g)$, with $f(0)=0$,  admits  a {\em balanced} super-linearization with one {\em visible } observable. Then, there exists a linear change of variables $x'=Px$ and a partition $x'=(x'_1\, \,x'_2)$ of $x'$ so that the nonlinear term of the dynamics~\eqref{eq:mainsys} expressed in $x'$ variables is a polynomial $p'(x'_2)$ and the dynamics of $x'_2$ is \emph{linear and autonomous}.
\end{theorem}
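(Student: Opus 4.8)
The plan is to reduce the statement to a purely linear-algebraic fact about an $A$-invariant subspace, and then to establish that fact from two \emph{global} consequences of super-linearizability rather than from the pointwise identities~\eqref{eq:master1} alone. First I would fix coordinates. Since there is a single visible observable and, by Theorem~\ref{th:rvf}, we may assume reduced visible form, the $G$-matrix has exactly one nonzero column, so after reordering the observables $G = g_1 e_1^\top$ with $g_1 \neq 0$; by Theorem~\ref{th:recmain1cor2} the system reads $\dot x = Ax + g_1 p_1(x) + Bu$ (here $g$ is the constant $B$, and $D=0$ because $f(0)=0$ and $p(0)=0$). Writing $d:=\deg p_1\ge 2$, the balanced hypothesis gives $\deg p_j\le d$ for every observable. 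Let $K:=\{v\in\R^n:\cL_v p_1\equiv 0\}=\bigcap_x \ker\frac{\partial p_1}{\partial x}(x)$; then $p_1$ is constant along $K$ and factors through $\R^n/K$. The theorem reduces to producing an $A$-invariant subspace $X_1$ with $g_1,B\in X_1\subseteq K$: choosing a complement $X_2$ and letting $P$ be the change of basis adapted to $\R^n=X_1\oplus X_2$, the block $A'_{21}$ vanishes by $A$-invariance, the terms $g_1p_1$ and $Bu$ fall in the $x'_1$-block since $g_1,B\in X_1$, and $p_1$ depends only on $x'_2$ since $X_1\subseteq K$; this is exactly~\eqref{eq:canonicalformexplicit}, and it renders $\dot x'_2$ linear and autonomous.

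The two \emph{seed} facts are $g_1,B\in K$. The second line of~\eqref{eq:master1} gives $\frac{\partial p_1}{\partial x}B=C_1$, and as $p_1$ has no linear term its directional derivative has no constant term, forcing $\cL_Bp_1=0$ (and $C_1=0$); matching the top homogeneous part in the first line of~\eqref{eq:master1}, whose right-hand side has degree $\le d$ whereas $(\cL_{g_1}p_1)\,p_1$ would have degree $2d-1$, similarly forces $\cL_{g_1}p_1=0$. The essential extra input is the degree bound
\[
\deg \cL_f^{\,k} p_1 \le d \qquad (k\ge 0),
\]
which holds because along the autonomous flow~\eqref{eq:GpGz} gives $p_1(x(t))=(z_2(t))_1$, so $\cL_f^k p_1(x_0)=\frac{d^k}{dt^k}p_1(x(t))|_{t=0}=e_{n+1}^\top A_\ell^k\,\iota(x_0)$ is a linear functional of $(x_0,p(x_0))$ and hence a polynomial of degree $\le d$ in $x_0$.

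The heart of the proof is to promote the seeds to their full $A$-orbits, $A^jg_1\in K$ and $A^jB\in K$ for all $j$. This is where the hidden observables are the main obstacle: the naive route through~\eqref{eq:master1} expresses $\cL_{Av}p_1$ in terms of the uncontrolled quantities $\cL_v p_j$ for hidden $p_j$, so I would bypass it. For $g_1$, using the commutators $[\cL_{A^ig_1},\cL_{Ax}]=\cL_{A^{i+1}g_1}$ one shows inductively that, once $\cL_{A^ig_1}p_1=0$ for $i<j$, $\cL_f^{\,j+1}p_1=\cL_{Ax}^{\,j+1}p_1+(\cL_{A^jg_1}p_1)\,p_1$; since $\cL_{Ax}^{\,j+1}p_1$ has degree $\le d$, the displayed bound makes the degree-$>d$ part of $(\cL_{A^jg_1}p_1)\,p_1$ vanish, and isolating homogeneous components (with $p_1^{[d]}\neq 0$ in the integral domain $\R[x]$) forces $\cL_{A^jg_1}p_1=0$. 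For $B$ the degree bound alone does not suffice, and I would invoke affineness in the control: repeatedly differentiating the tracking relation $(z_2)_1=p_1(x)$ and discarding nonlinear contributions via $\cL_{A^ig_1}p_1=0$ yields $M_1M^k z_2(t)=\cL_{Ax}^{\,k+1}p_1(x(t))$; matching the coefficient of $u$ there equates the constant $M_1M^kC$ with $\cL_B\cL_{Ax}^{\,k+1}p_1=\cL_{A^{k+1}B}p_1$ (using $A^\ell B\in K$ for $\ell\le k$), and as the latter has no constant term it must vanish, giving $A^{k+1}B\in K$ by induction.

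Finally I would set $X_1:=\langle A\mid g_1,B\rangle=\operatorname{span}\{A^jg_1,A^jB:j\ge 0\}$, which by the orbit results lies in $K$, is $A$-invariant, and contains $g_1$ and $B$, and then read off~\eqref{eq:canonicalformexplicit} as in the first paragraph. I expect the control step (the $A$-orbit of $B$) to be the most delicate point: unlike the orbit of $g_1$, it cannot be obtained from the autonomous degree bound and instead requires extracting from the super-linearization the vanishing of the forbidden bilinear control–state terms.
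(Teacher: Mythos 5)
Your proposal is correct, and it reaches the theorem by the same two-stage strategy as the paper: first establish that $\frac{\partial q}{\partial x}\,v\equiv 0$ for every $v$ in the $A$-orbits of $\bar G$ and of $B$ (this is exactly Proposition~\ref{prop:mainprop}), then conclude by a linear change of basis. Within that skeleton you make two genuine departures. First, the paper proves the orbit statements by one joint induction on the order of time-differentiation, carrying the three items $\cL_{A^j\bar G}\cL_{Ax}^{k-j-1}q=0$, $\cL_{A^jB}\cL_{Ax}^{k-j-1}q=0$ and $\frac{d^k}{dt^k}Gp=GM^kz_2=\cL^k_{Ax}Gp$, and disposes of the nonlinear terms with Lemma~\ref{lem:dpsiA12}; you instead decouple the two orbits, getting the $\bar G$-orbit purely along the autonomous flow from the global bound $\deg\cL_f^k q\le d$ --- read off the tracking identity~\eqref{eq:GpGz}, which is where balancedness enters in both proofs, via the terms $e_1^\top M^k p$ --- together with a top-degree/integral-domain argument replacing Lemma~\ref{lem:dpsiA12}, and then handling the $B$-orbit by coefficient-of-$u$ matching exactly as in the paper's~\eqref{eq:morezeros}. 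This repackaging is sound and somewhat leaner. Second, and more importantly, your final step differs where it matters: the paper takes $V$ to be the largest linear subspace contained in $\bigcap_x\ker G\frac{\partial p}{\partial x}|_x$ (which is that intersection itself, since an intersection of kernels is already a subspace) and asserts that $A'=T^{-1}AT$ inherits the block form~\eqref{eq:canonicalformexplicit}; but that subspace need not be $A$-invariant, so $A'_{21}=0$ does not follow as written. Concretely, for $\dot x_1=0$, $\dot x_2=x_2+x_3^2+u$, $\dot x_3=x_1$, which admits the balanced super-linearization with observables $x_3^2$ (visible) and $x_1x_3,\,x_1^2$ (hidden), the kernel intersection is $\Span\{e_1,e_2\}$ while $Ae_1=e_3$, so a basis adapted to it leaves $\dot x_3=x_1$ non-autonomous; the subspace that works is your $X_1=\Span\{A^j\bar G,\,A^jB:\ j\ge 0\}=\Span\{e_2\}$. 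Your choice of $X_1$ --- $A$-invariant by construction, contained in $K$ by the orbit results, and containing $\bar G$ and $B$ --- is precisely what delivers~\eqref{eq:canonicalformexplicit}, and it closes this gap in the paper's last step.
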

The canonical form extends to systems with more than one visible observables, but we focus here on the single observable case for clarity of exposition. 

\section{Proof of the main results}
We now prove Theorem~\ref{th:canon}.

Recall that from Theorem~\ref{th:rvf}, we can assume without loss of generality that if a system is super-linearized via a polynomial map $p :\R^n \to \R^m$, then it can be super-linearized with polynomial observables that do not have constant nor linear terms. This fact plays an important role below and we always assume from now on that the observables are polynomial maps of degree $d \geq 2$.

Furthermore, we assume that the $G$-matrix of the super-linearization is of  rank $m_v=1$. This can  be assumed without loss of generality using Theorem~\ref{th:rvf}.	We denote by $\cL$  such a super-linearization, and we let $q:\R^n \to \R$ be the visible observable, and let $p_{2},\ldots,p_m$ be the hidden observables. Hence, the $G$-matrix can be expressed as $\bar G e_1^\top$, where $\bar G \in \R^n$; recall that $e_i$ is the column vector with all entries zero save for the $i$th entry, which is one.
	
Theorem~\ref{th:canon} will be a consequence of the next Proposition. 

\begin{proposition}\label{prop:mainprop}
Under the assumptions of Theorem~\ref{th:canon}, let $q:\R^n \to \R:x \mapsto e_1^\top p$ be the visible observable and let the $G$-matrix be $G= \bar G e_1^\top$.  Then, for $1 \leq i \leq m_v$, 
\begin{equation}\label{eq:dpsiconstantB}
\frac{\partial q}{\partial x}v =0 \mbox{ for all }  v \in C(A,B), \mbox{ and all } x \in \R^n 
\end{equation}
and 
\begin{equation}\label{eq:dpsiconstantA12}
\frac{\partial q}{\partial x} v =0 \mbox{ for all } v \in O(A,G)\mbox{ and all } x \in \R^n	
\end{equation}
\end{proposition}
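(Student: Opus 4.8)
\section*{Proof proposal}

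The plan is to run everything off a single structural consequence of super-linearization that the master equations~\eqref{eq:master1} do \emph{not} capture on their own. Because $G=\bar G e_1^\top$ has rank one, Theorem~\ref{th:recmain1cor2}(3) forces the first observable coordinate of the linear system to track the visible observable exactly, $z_2^{(1)}(t)=q(x(t))$, along \emph{every} trajectory and for every control. With $f=Ax+\bar G q$ and $g=B$ (Theorem~\ref{th:recmain1cor2}(1), using $D=0$), differentiating at $t=0$ under a constant control $u$ gives, for every $k\ge1$,
\begin{equation*}
\cL_{f+uB}^{\,k}\,q \;=\; \tfrac{d^k}{dt^k}\big|_{t=0}\,z_2^{(1)}(t)\;=\;e_{n+1}^\top\big(A_\ell^{\,k}z_0+A_\ell^{\,k-1}(B_\ell u+D_\ell)\big),\qquad z_0=\iota(x_0).
\end{equation*}
The right-hand side is \emph{affine in $u$}, and since the super-linearization is balanced every entry of $z_0$ is a polynomial in $x_0$ of degree at most $d:=\deg q$, so the right-hand side has degree at most $d$ in $x_0$. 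These two facts --- a degree ceiling $d$ and affineness in $u$ --- are the only inputs I will use, and they are precisely what lets one sidestep the fact that~\eqref{eq:master1} constrains only the $G$-shadow of $p$ and nothing about the hidden observables.

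For~\eqref{eq:dpsiconstantA12}, set $u=0$ and write $\cL_f=\cL_{Ax}+q\,\cL_{\bar G}$. I would prove by induction on $k$ the pair of statements $\cL_{A^k\bar G}q=0$ and $\cL_f^{\,k+1}q=\cL_{Ax}^{\,k+1}q$. The commutation identity $[\cL_{A^i\bar G},\cL_{Ax}]=\cL_{A^{i+1}\bar G}$ --- a case of~\eqref{eq:propliebracket}, since $[A^i\bar G,Ax]=A^{i+1}\bar G$ --- telescopes through the binomial identity $\cL_{\bar G}\cL_{Ax}^{\,k}=\sum_{i}\binom{k}{i}\cL_{Ax}^{\,k-i}\cL_{A^i\bar G}$ to give $\cL_{\bar G}\cL_{Ax}^{\,k}q=\cL_{A^k\bar G}q$ once the lower directional derivatives vanish, whence $\cL_f^{\,k+1}q=\cL_{Ax}^{\,k+1}q+q\,\cL_{A^k\bar G}q$. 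The first summand has degree $\le d$; if $\cL_{A^k\bar G}q\neq0$, then multiplying it by the nonzero homogeneous part $q^{[d]}$ produces a nonzero term of degree $>d$, since the polynomial ring is an integral domain. As the left-hand side has degree $\le d$, this forces $\cL_{A^k\bar G}q=0$. The base case $k=0$ is the same computation applied to $\cL_f q=\cL_{Ax}q+q\,\cL_{\bar G}q$. Because every element of $O(A,G)$ is a linear combination of the $A^k\bar G$, this yields~\eqref{eq:dpsiconstantA12}.

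For~\eqref{eq:dpsiconstantB} the degree-raising lever is gone, so I would use affineness in $u$ instead. Let $S_k:=\partial_u\big(\cL_{f+uB}^{\,k}q\big)\big|_{u=0}=\sum_{i=0}^{k-1}\cL_f^{\,i}\cL_B\cL_f^{\,k-1-i}q$ be the coefficient of $u$. By the displayed identity $S_k=e_{n+1}^\top A_\ell^{\,k-1}B_\ell$ is a \emph{constant}; but each summand has no constant term, because $\cL_f^{\,k-1-i}q$ has lowest degree $\ge2$ (the nonlinear part $q\,\cL_{\bar G}$ of $\cL_f$ multiplies by $q$), one $\cL_B$ lowers this to degree $\ge1$, and $\cL_f$ preserves the absence of a constant term. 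Hence $S_k=0$ for all $k$. Combining the recursion $S_{k+1}=\cL_f S_k+\cL_B\cL_f^{\,k}q$ with the autonomous identity $\cL_f^{\,k}q=\cL_{Ax}^{\,k}q$ from the previous paragraph gives $\cL_B\cL_{Ax}^{\,k}q=0$; the binomial identity (now with $B$ in place of $\bar G$) together with induction on $k$ then extracts $\cL_{A^kB}q=0$, which is~\eqref{eq:dpsiconstantB}.

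The genuine obstacle is the one the master equations leave open: the hidden observables are invisible to~\eqref{eq:master1}, so one cannot propagate $\cL_B q=0$ or $\cL_{\bar G}q=0$ to higher powers of $A$ by differentiating the partial differential equations alone --- the coupling term $\mu^\top p$, with $\mu^\top=e_1^\top M$, reintroduces uncontrolled hidden-observable derivatives at each step. The device that defeats this is the balanced hypothesis: it caps the degree of $z_0$ at $d$, so every higher time-derivative of $q$ is a degree-$\le d$ polynomial, while the feedback direction $\bar G$ \emph{inflates} degree under $\cL_f$; the collision between inflation and this ceiling is what forces the successive directional derivatives to vanish. The commutation and binomial bookkeeping and the degree estimates are routine; isolating this degree mechanism is the crux.
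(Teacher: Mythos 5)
Your proof is correct in substance and reaches both conclusions, but it is organized genuinely differently from the paper's. The paper runs one simultaneous induction on three items --- $\cL_{A^j\bar G}\cL_{Ax}^{k-j-1}q=0$, $\cL_{A^jB}\cL_{Ax}^{k-j-1}q=0$, and $\frac{d^k}{dt^k}Gp=GM^kz_2=\cL_{Ax}^kGp$ --- extracting at each stage the side relations $GM^kH=GM^kC=GM^kE=0$ by matching terms in $x$ and $u$, and delegating the key degree step to Lemma~\ref{lem:dpsiA12}, whose proof is a downward induction on homogeneous components plus a zero-set/density argument. You instead use the single identity $\cL_{f+uB}^{\,k}q=e_{n+1}^\top\bigl(A_\ell^{\,k}z_0+A_\ell^{\,k-1}(B_\ell u+D_\ell)\bigr)$ as the only interface with the super-linearization: this is close to the paper's third induction item, but you obtain it for free for all $k$ from the linear ODE with constant input, and you never need the relations $GM^kH=GM^kC=GM^kE=0$. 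From it you keep exactly two facts, the degree ceiling $d$ (balancedness) and affineness in $u$. Your substitute for Lemma~\ref{lem:dpsiA12} --- since $\cL_{\bar Gq}\psi=q\,\cL_{\bar G}\psi$, a nonzero $\cL_{\bar G}\psi$ would force $\deg(q\,\cL_{\bar G}\psi)>d$ by multiplicativity of degree in an integral domain --- is shorter and purely algebraic, avoiding the Lebesgue-measure-zero argument entirely. Your treatment of the $B$-directions via the $u$-coefficient $S_k=\sum_{i}\cL_f^{\,i}\cL_B\cL_f^{\,k-1-i}q$, the observation that a constant polynomial with no constant term is zero, and the recursion $S_{k+1}=\cL_fS_k+\cL_B\cL_f^{\,k}q$ is likewise a real alternative to the paper's term-matching inside the induction; and the closed-form binomial identity $\cL_{\bar G}\cL_{Ax}^{\,k}=\sum_i\binom{k}{i}\cL_{Ax}^{\,k-i}\cL_{A^i\bar G}$ replaces the paper's one-commutator-at-a-time peeling. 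What the paper's bookkeeping buys is that each new time derivative contains a single $\cL_{Gp}$ or $\cL_{Bu}$ insertion; what yours buys is economy: no auxiliary lemma and two short inductions instead of a three-item one.

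One step needs a patch. In the claim ``if $\cL_{A^k\bar G}q\neq0$, then multiplying by $q^{[d]}$ produces a nonzero term of degree $>d$,'' the conclusion fails if $\cL_{A^k\bar G}q$ is a nonzero \emph{constant}: the product then has degree exactly $d$, which is consistent with the ceiling, so no contradiction follows. You must add that $q$ is in reduced visible form, hence has no constant or linear terms, so $\cL_{A^k\bar G}q=\frac{\partial q}{\partial x}A^k\bar G$ has no constant term and, if nonzero, has degree at least $1$, making the product of degree at least $d+1$. You invoke exactly this fact later in your $S_k$ argument, so the repair is one line; note that the paper builds it into Lemma~\ref{lem:dpsiA12} through the hypothesis $\psi^{[0]}=\psi^{[1]}=0$.
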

We recall that $C(A,B)$ and $O(A,G)$ are the controllability and observability matrices of the linear system in~\eqref{eq:assoclinsys} and $A$ and $B$ are as in the partition of $A_\ell, B_\ell$ described in~\eqref{eq:partabell}.

 We need the following Lemma for the proof of Proposition~\ref{prop:mainprop}.

\begin{lemma}\label{lem:dpsiA12}
	Let $\psi:\R^n \to \R$  be a polynomial of degree  $d$ and with $\psi^{[0]}=\psi^{[1]}=0$,  let $q:\R^n \to \R$ be a polynomial of  degree $d'$, let   $\varphi:\R^n \to \R$ a polynomial  of degree at most $d'$, and $\bar G \in \R^n$. If
	\begin{equation}\label{eq:Gqphi}
\cL_{\bar Gq} \psi = \varphi		
	\end{equation}
 then
	$$ \cL_{\bar Gq}\psi=\cL_{\bar G}\psi=0.$$    
\end{lemma}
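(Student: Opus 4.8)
The plan is to reduce the identity \eqref{eq:Gqphi} to a single polynomial factorization and then argue by degree in the integral domain $\R[x_1,\ldots,x_n]$. Since $\bar G$ is a constant vector, the scalar $q(x)$ factors out of the directional derivative: writing $\eta := \cL_{\bar G}\psi = \frac{\partial \psi}{\partial x}\bar G$, one has $\cL_{\bar Gq}\psi = \frac{\partial \psi}{\partial x}(\bar G\, q) = q\,\eta$. Thus the hypothesis \eqref{eq:Gqphi} reads $q\,\eta = \varphi$, and the two claimed conclusions are precisely $\eta = 0$ (which then also gives $q\eta = 0$).

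Next I would compare degrees. The polynomial $q$ has a well-defined degree $d'$, so in particular $q \neq 0$, and $\R[x_1,\ldots,x_n]$ is an integral domain, so if $\eta \neq 0$ then $\deg(q\eta) = \deg q + \deg \eta = d' + \deg \eta$. Since $\deg \varphi \le d'$ by assumption, the equality $q\eta = \varphi$ forces $\deg \eta \le 0$, i.e. $\eta$ would have to be a \emph{nonzero constant}. The final step rules this out using $\psi^{[0]} = \psi^{[1]} = 0$: because $\psi$ has lowest-degree term of degree at least $2$, each partial derivative $\partial \psi/\partial x_j$ has lowest-degree term of degree at least $1$, and $\eta = \sum_j \bar G_j\, \partial \psi/\partial x_j$ is an $\R$-linear combination of these, so $\eta^{[0]} = 0$. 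A nonzero constant has nonvanishing degree-zero part, a contradiction. Hence $\eta = 0$, which yields $\cL_{\bar G}\psi = 0$ and $\cL_{\bar Gq}\psi = q\eta = 0$ at once.

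The argument is short, and the step I expect to be the crux is the interplay of the two degree facts: integral-domain additivity of degrees forces $\eta$ to be constant, while the vanishing of the constant and linear parts of $\psi$ forces that constant to be zero. No genuine obstacle arises, provided one records at the outset that $q \neq 0$ — implicit in $q$ having a well-defined degree $d'$, and, in the intended application, also a consequence of $q$ being a visible observable in reduced visible form.
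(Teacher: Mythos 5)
Your proof is correct, and it takes a genuinely different route from the paper's. The paper expands $\psi$ and $q$ into homogeneous parts and runs a downward induction on degree: at each level it isolates an identity of the form $\frac{\partial \psi^{[d-k]}}{\partial x}\bar G\, q^{[d']}=0$ and strips off the factor $q^{[d']}$ by a density argument (the zero set of the nonzero polynomial $q^{[d']}$ is closed and of Lebesgue measure zero, so the continuous function $\frac{\partial \psi^{[d-k]}}{\partial x}\bar G$, vanishing on a dense open set, vanishes identically). You bypass both the induction and the measure-theoretic step by noting at the outset that the Lie derivative of a \emph{function} is pointwise linear in the vector-field argument, so the scalar factors out: $\cL_{\bar Gq}\psi=q\,\cL_{\bar G}\psi=:q\eta$. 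The hypothesis then collapses to the single identity $q\eta=\varphi$ in the integral domain $\R[x_1,\ldots,x_n]$, where additivity of degrees forces $\deg\eta\le 0$, and $\psi^{[0]}=\psi^{[1]}=0$ rules out a nonzero constant. Your route is shorter, purely algebraic, and makes explicit exactly where each hypothesis enters; in particular you rightly flag that $q\neq 0$ is indispensable (for $q\equiv 0$ the conclusion $\cL_{\bar G}\psi=0$ fails, e.g.\ $\psi=x_1^2$, $\bar G=e_1$, $\varphi=0$), a fact the paper invokes only implicitly when it uses that $q^{[d']}$ is a nonzero polynomial. Nothing is lost in the streamlining: the paper's componentwise conclusion $\frac{\partial\psi^{[j]}}{\partial x}\bar G=0$ for every $j$ is equivalent to your $\eta=0$. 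One minor tightening: your degree count tacitly assumes $\varphi\neq 0$; when $\varphi=0$, the integral-domain property applied to $q\eta=0$ with $q\neq 0$ gives $\eta=0$ at once, so the argument closes in either case.
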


\begin{proof} If $d <2$, there is nothing to prove. We thus assume that $d \geq 2$. If $d'=0$, then $q$ is a constant and the result holds from the linearity of the Lie derivative. We thus assume that $d' \geq 1$.

Recall that we can write $q$ as $\sum_{i=0}^{d'} q^{[i]}$, where $q^{[i]}:\R^n \to \R $ is a homogeneous polynomial of degree $i$.  The proof is by induction on the degree of the homogeneous terms in $\cL_{\bar Gq}\psi$. The right-hand-side of $\cL_{\bar Gq} \psi = \varphi$ has terms of degree at {\em most} $d'$, whereas the left-hand-side has terms of degrees up to $d+d'-1$. We equate the terms of the same degree in $x$, for degrees $d'+1$ to $d+d'-1$. Starting with degree $d+d'-1$, we have
	\begin{equation}\label{eq:eqiter2d-1} \frac{\partial \psi^{[d]}}{\partial x} \bar G q^{[d']}=0 \mbox{ for all } x \in \R^n.
	\end{equation}
	Now let  $Z(q) \subseteq \R^n$ be the zero-set of $q$,  it holds that 
	\begin{equation}\label{eq:eqiterc1}\frac{\partial \psi^{[d]}}{\partial x} \bar G =0 \mbox{ for }  x \notin Z(q^{[d']}).
	\end{equation}
	Because $q^{[d']}$ is a non-zero polynomial, $Z(q^{[d']})$ is closed and of Lebesgue measure zero in $\R^n$ (see, e.g.,~\cite{lojasiewicz1964triangulation}). The function $\frac{\partial \psi^{[d]}}{\partial x} \bar G$ is thus a continuous function, defined on $\R^n$, which vanishes identically  on an open dense subset of $\R^n$. We conclude that it is zero everywhere on $\R^n$, i.e.,  
		\begin{equation}\label{eq:iterc1p}\frac{\partial \psi^{[d]}}{\partial x} \bar G \equiv 0.
	\end{equation}

	Equating the terms of degree $d+d'-2$ in~\eqref{eq:Gqphi}, we have
	\begin{equation}\label{eq:eqiter2d-2}  
	\frac{\partial \psi^{[d]}}{\partial x} \bar G q^{[d'-1]}+\frac{\partial \psi^{[d-1]}}{\partial x} \bar G q^{[d']}=0 \mbox{ for all } x \in \R^n.
	\end{equation} 
	
	Using~\eqref{eq:iterc1p}, the above equation simplifies to
	$$
	\frac{\partial \psi^{[d-1]}}{\partial x} \bar G q^{[d']}=0 \mbox{ for all } x \in \R^n,
	$$ from which we obtain, using the same argument as above involving the zero set of $q^{[d']}$, that 
	\begin{equation}\label{eq:eqiterc2}
	\frac{\partial \psi^{[d-1]}}{\partial x} \bar G=0.
	\end{equation}

	For the general case, let $1 \leq k \leq d+d'-2$ and assume that
	\begin{equation}\label{eq:eqiterpc3}
	\frac{\partial \psi^{[d-j]}}{\partial x} \bar G=0 \mbox{ for all } x \in \R^n \mbox{ and all } 0 \leq j \leq k-1.
	\end{equation}
	We show that $\frac{\partial \psi^{[d-k]}}{\partial x}\bar G=0$ for all $x \in \R^n$. Equating the terms of degree $d+d'-k-1$ in~\eqref{eq:Gqphi},  we have
	\begin{equation}\label{eq:eqiterpck}
	\sum_{j=0}^{k} \frac{\partial \psi^{[d-j]}}{\partial x} \bar G q^{[d'-k+j]}=0 \mbox{ for all } x \in \R^n.
	\end{equation}
From the induction hypothesis, we have that all terms in the summation above vanish save for the term corresponding to $j=k$. Hence,~\eqref{eq:eqiterpck}  reduces to $$ \frac{\partial \psi^{[d-k]}}{\partial x} \bar G q^{[d']}=0 \mbox{ for all } x \in \R^n.
	$$
	Using the same argument as above, we conclude that 
$$\frac{\partial \psi^{[d-k]}}{\partial x}\bar G =0 \mbox{ for all } x \in \R^n$$ 
as announced. Hence $\frac{\partial \psi^{[k]}}{\partial x} \bar G=0$ for $0 \leq k \leq d$, which implies that $\frac{\partial \psi}{\partial x} \bar G  =\cL_{\bar G} \psi=0$.  This concludes the proof.
 \end{proof}

We now prove  Proposition~\ref{prop:mainprop}.

\begin{proof}[Proof of Proposition~\ref{prop:mainprop}]
The proof will be by induction over $k$ for the following 3 items:

\begin{enumerate}
\item[\i] $\cL_{A^j\bar G}\cL_{Ax}^{k-j-1}	q=0 $ for all $x \in \R^n$ and $0 \leq j \leq k-1$.
\item[\ii] $\cL_{A^jB}\cL_{Ax}^{k-j-1}	q=0 $ for all $x \in \R^n$ and $0 \leq j \leq k-1$.
\item[\iii] $\frac{d^{k}}{dt^{k}} Gp(x(t))=GM^{k}z_2= \cL^k_{Ax} Gp$ 
\end{enumerate}
Because $G=\bar G e_1^\top$ with non-zero $\bar G$, the third item can be expressed equivalent as
$$
\frac{d^{k}}{dt^{k}} q(x(t))=GM^{k}z_2= \cL^k_{Ax} q.
$$
Though all the items above are needed to perform the induction, the statement of Proposition~\ref{prop:mainprop} is comprised of  items~\i and~\ii for $j=k-1$.

\mab{Base case $k=1$.} 
On the one hand, using~\eqref{eq:GpGz} and the expression for the super-linearized dynamics, we have
\begin{align}\label{eq:ddtgp0}
\ddt Gp(x(t))& = G\ddt z_2\notag \\
&= G(Hz_1+Mz_2+Cu+E).	
\end{align}
  On the other hand, using the chain rule and the standard form of the dynamics given in Theorem~\ref{th:recmain1cor2},  we have
 \begin{equation}\label{eq:ddtgp1}
\frac{d}{dt} Gp(x(t)) = G \frac{\partial p}{\partial x}(Ax+Gp + Bu)	
 \end{equation}
Evaluating~\eqref{eq:ddtgp0} and~\eqref{eq:ddtgp1} at $t=0$, where we recall that the initial state for $z$ is then given by $\iota(x) = \begin{pmatrix} x & p(x) \end{pmatrix}$, we obtain the relation

\begin{equation}\label{eq:equatin1}
\frac{d}{dt}|_{t=0} Gp(x(t))=G\frac{\partial p}{\partial x} (Ax+Gp(x)+Bu) = G(Hx+Mp(x)+Cu+E)	
\end{equation}
Since $p$ has terms of degree at least 2, we have that  $\frac{\partial p}{\partial x}$ is a matrix of polynomials with degrees at least 1 and we conclude by matching the terms of the same degree in $x$ and $u$ that 
\begin{equation}\label{eq:somezeros}
G\frac{\partial p}{\partial x}B =0 \mbox{ and } GC=GE= GH=0.
\end{equation}	
Using these relations in~\eqref{eq:ddtgp1}, we have 
\begin{equation}\label{eq:ddtgp3}
\frac{d}{dt} Gp(x(t))=\cL_{Ax+Gp}Gp,
\end{equation}
and using them in~\eqref{eq:ddtgp0}, we get
\begin{equation}\label{eq:ddtgp4}
\frac{d}{dt} Gp(x(t))=GMz_2.
\end{equation}
This proves the {\em first part of item~3}.

Equating the right-hand sides of~\eqref{eq:ddtgp3} and~\eqref{eq:ddtgp4} at $t=0$ and using $G=\bar G e_1^\top$, we obtain 

\begin{equation}\label{eq:masterc}
\bar G \frac{\partial q}{\partial x}(Ax+\bar G q(x))=\bar Ge_1^\top Mp(x) 
\end{equation}
From that relation  together with fact that $\bar G$ is nonzero,  we have 
$$
\frac{\partial q}{\partial x}\bar Gq = e_1^\top Mp-\frac{\partial q}{\partial x} Ax=:\varphi(x)
$$

Because the super-linearization is balanced,  the polynomial $\varphi(x)$ has degree at most equal to the degree of $q$. Hence, by Lemma~\ref{lem:dpsiA12}, it holds that $\cL_{\bar G}q=0$. This proves  {\em item~1} for the case $k=1$.

For the {\em second item}, we have from~\eqref{eq:somezeros} that $$
G\frac{\partial p}{\partial x}B = \bar G \frac{\partial q}{\partial x} B=0
$$ and, since $\bar G \neq 0$, that $\frac{\partial q}{\partial x}B=0$. 

For the {\em second part of item~3},  we get from~\eqref{eq:ddtgp1}, and the fact that $G=\bar G e_1^\top$, the relation
$$
\frac{d}{dt} q =  \frac{\partial q}{\partial x} (Ax+\bar G q+Bu).
$$

We have proven above that $\cL_{ \bar G}q=0$  and $\cL_B q = 0$. Plugging these relations,  we get that $$\ddt q = \frac{\partial q}{\partial x}Ax = \cL_{Ax} q.$$ Since $Gp=\bar G q$, multiplying the  above equation by $\bar G$ on the left yields the second equation of item~\iii for $k=1$. This concludes the proof of the base case.

\mab{Inductive step for $k > 1$.} 
    
We assume that items 1 through 3 hold for $1,\ldots,k$, and show that they hold for $k+1$. We start by evaluating the $(k+1)$th  time derivative of $Gp$. Using item~3 of the induction hypothesis, we have  on the one hand that 
\begin{align}\label{eq:ddkgp1}
\ddtkp Gp(x) &= \ddt \ddtk Gp \notag\\
&= (\cL_{Ax}  +\cL_{Gp}  +\cL_{Bu})\cL^{k}_{Ax}Gp
\end{align}
On the other hand,  it also holds that 
\begin{align}\label{eq:ddkgp2}
\ddtkp Gp &=\ddt \ddtk  Gz_2\notag \\
&=\ddt GM^kz_2(t) \notag \\
&=GM^k (Hz_1+Mz_2+Cu+E)
\end{align}
where we used the definition of the super-linearized dynamics.
Equating~\eqref{eq:ddkgp1} and~\eqref{eq:ddkgp2} at $t=0$, we obtain
\begin{multline}\label{eq:gencaseequatin}
\ddtkp|_{t=0} Gp(x)= 	(\cL_{Ax}  +\cL_{Gp}  +\cL_{Bu})\cL^{k}_{Ax}Gp\\=GM^k (Hx+Mp(x)+Cu+E).
\end{multline}
Equating the terms of same degree in $x$ and $u$ in~\eqref{eq:gencaseequatin},   we get  
\begin{equation}\label{eq:morezeros}\cL_{Bu}\cL^{k}_{Ax}Gp=0\mbox{ and } GM^kH=GM^kC=GM^kE=0.
\end{equation}	
 Using these relations in~\eqref{eq:ddkgp2}, we obtain
\begin{equation}\label{eq:dd2gpsimp}
\frac{d^{k+1}}{dt^{k+1}}Gp= 	GM^{k+1}p,
\end{equation}
proving the first part of item~\iii for $k+1$. Further  using them in~\eqref{eq:ddkgp1}, we obtain
\begin{equation}\label{eq:dd2gpsimp2}
\frac{d^{k+1}}{dt^{k+1}}Gp= 	\cL_{Ax+Gp}\cL^{k}_{Ax}Gp.
\end{equation}
Using  item~\iii of the induction hypothesis and~\eqref{eq:dd2gpsimp} then yield
\begin{equation}
	\frac{d^{k+1}}{dt^{k+1}} q(x(t)) =\frac{d}{dt}\frac{d^{k}}{dt^{k}} q(x(t)) =\frac{d}{dt} \cL^{k}_{Ax}q =  e_1^\top M^{k+1}q(x(t)).
\end{equation}

 We now evaluate the time derivative of $\cL^{k}_{Ax}q$
\begin{multline}\label{eq:ddta11l}
\ddt \cL^k_{Ax}q = \cL_{Ax+ Gp+Bu}\cL^{k}_{Ax}q= \left(\cL^{k+1}_{Ax}+\cL_{ Gp}\cL^{k}_{Ax}+\cL_{B u}\cL^{k}_{Ax}\right)q\\= e_1^\top M^{k+1}p(x(t)).
\end{multline}
We focus on the last equation in~\eqref{eq:ddta11l}. Evaluating it at $t=0$, we see that the only term containing the control is $u \cL_{B}\cL^k_{Ax}q $ on the left-hand side, and no term contains the control on the right-hand side. We conclude from it that 
\begin{equation}\label{eq:item2j0}
\cL_{B}\cL^k_{Ax}q =0.
\end{equation}
This proves item~\ii  of the induction hypothesis for $j=0$ and $k+1$.

Expanding on this equation, we have
\begin{align*} 
0=\cL_{B}\cL^k_{Ax}q&=\cL_{B}\cL_{Ax}\cL^{k-1}_{Ax}q
\end{align*}Using~\eqref{eq:propliebracket}, we get
$$\cL_{B}\cL_{Ax} \cL_{Ax}^{k-1}q= (\cL_{[B,Ax]}-\cL_{Ax}\cL_{B})\cL_{Ax}^{k-1}q.
$$
The second term on the right-hand side vanishes by item~\ii for $j=0$ and $k$ of the induction hypothesis.  The Lie bracket above is easily evaluated to be $[ B,Ax]=A B.$
We have thus shown that 
\begin{equation}\label{eq:adb1ada11l}0=\cL_{B}\cL_{Ax}^{k}q = \cL_{AB}\cL_{Ax}^{k-1}q, 
\end{equation}
which proves item~\ii of the inductive step for $j=1$ and $k+1$. Applying the steps above, namely factoring $\cL_{Ax}^{k-1}$ as $\cL_{Ax}\cL_{Ax}^{k-2}$ and using~\eqref{eq:propliebracket}, we obtain

\begin{align*}
0=	\cL_{AB}\cL_{Ax}^{k-1}q &=\cL_{AB}\cL_{Ax}\cL_{Ax}^{k-2}q\\
&= (\cL_{[AB,Ax]}-\cL_{Ax}\cL_{AB})\cL_{Ax}^{k-2}q\\
&= (\cL_{A^2B}-\cL_{Ax}\cL_{AB})\cL_{Ax}^{k-2}q.
\end{align*}
The second term in the last expression vanishes by the item~\ii  of the induction hypothesis for $j=1$ and $k-1$, which shows that $\cL_{A^2B}\cL_{Ax}^{k-2}q=0$ and proves the second item of the induction hypothesis for $j=2$ and $k+1$.

Iterating the steps above, we get in general
$$
0 = (\cL_{A^jB}\cL_{Ax}^{k-j}-\cL_{Ax}\cL_{A^{j-1}B} \cL_{Ax}^{k-j})q \mbox{ for }j \in \{1,\ldots,k \}
$$
The second term vanishes by item~\ii of the induction hypothesis with $j-1$ and $k$, and this proves item~\ii for $j$ and $k+1$.
We can proceed in this fashion until $j=k$, then the use of the induction hypothesis for $j=k-1$ and $k-1$, proves that $\cL_B\cL^k_{Ax}q=\cL_{A^kB} q =0$, which in turns proves item~\ii for $j=k$ and $k+1$. This conclude the proof of item~\ii.

We now focus on the proof of the first item. From~\eqref{eq:ddta11l} evaluated at $t=0$ and item~\ii, we have that for all $x \in \R^n$ 
\begin{align*}
\cL_{\bar G q}\cL^{k}_{Ax}q(x)=e_1^\top M^{k+1}p(x)-\cL^{k+1}_{Ax}q(x).
\end{align*}

Because the super-linearization is balanced, the right-hand side is a polynomial with terms of degree no higher than the degree of $q$ and thus no higher than the degree of $\cL^{k}_{Ax}q(x)$. Hence, by Lemma~\ref{lem:dpsiA12}, it holds that 
\begin{equation}\label{eq:item1j0}
\cL_{\bar Gq}\cL^{k}_{Ax}q(x)=\cL_{\bar G}\cL^{k}_{Ax}q(x)=0,
\end{equation}
proving item~1 of the induction hypothesis for $j=0$ and $k+1$.

Using the same approach as in the proof of item~\ii, we obtain from~\eqref{eq:item1j0}
\begin{align*}
0=\cL_{\bar G}\cL^{k}_{Ax}q(x) &=\cL_{\bar G}\cL_{Ax}\cL^{k-1}_{Ax}q(x)\\
&= \left(\cL_{[\bar G,Ax]}-\cL_{Ax} \cL_{\bar G} \cL^{k-1}_{Ax}\right) q(x)\\
&=\left(\cL_{A\bar G}-\cL_{Ax} \cL_{\bar G} \cL^{k-1}_{Ax}\right) q(x),
\end{align*}
Using item~\i of the induction hypothesis for $j=0$ and $k$, the last term in the previous equation is seen to be zero and we have that
$$
\cL_{A\bar G}\cL^{k-1}_{Ax} q=0,
$$
thus proving item~\i of the induction hypothesis for $j=1$ and $k+1$. The proof now proceeds along the exact lines of the proof of the second item, with $\bar G$ instead of $B$.

It remains to prove the last part of item~3 of the induction hypothesis for $k+1$. We have from~\eqref{eq:ddta11l}
\begin{align*}
\frac{d^{k+1}}{dt^{k+1}}q &= \left(\cL^{k+1}_{Ax}+\cL_{\bar Gq}\cL^{k}_{Ax}+\cL_{B u}\cL^{k}_{Ax}\right)q\\
&=  \cL^{k+1}_{Ax}q,
\end{align*}
where we used~\eqref{eq:item2j0} and~\eqref{eq:item1j0} to obtain the second equality. This concludes the proof.
 \end{proof}

We are now in a position to prove Theorem~\ref{th:canon}.
\begin{proof}[Proof of Theorem~\ref{th:canon}]
Let $V \subset \R^n$ be the {\em largest} linear subspace so that $$V \subseteq \bigcap_{x \in \R^n} \ker G\frac{\partial p}{\partial x}|_x.$$

Let $k:=\dim V$. From Proposition~\ref{prop:mainprop}, we have that  $\bar G \in \ker G\frac{\partial p}{\partial x}$ for all $x \in \R^n$ and thus $k>0$.  Let $T \in GL(n)$ be so that the first $k$ columns of $T$ span the vector subspace $V$, and the remaining columns are arbitrary. Introduce the variable $x'$, defined by 
$$
Tx'= x
$$
and partition them as $$x' :=\begin{bmatrix}  \bar x_1'\\  \bar x_2'\end{bmatrix}$$ where $ \bar x'_1 \in \R^k$ and $\bar x'_2 \in \R^{n-k}.$ We will denote by $[x']_j$ the $j$th entry of the vector $x'$.

Using Theorem~\ref{th:recmain1cor2}, we have that the dynamics~\eqref{eq:mainsys} is, expressed in the $x'$ variables, equivalent to
\begin{align}\label{eq:dynx'}
	\dot x'&= T^{-1}ATx' + T^{-1}Gp(Tx') + T^{-1}Bu\\
	&=T^{-1}ATx' + T^{-1}\bar G q(Tx') + T^{-1}Bu\\
	&= A'x'+ \bar G' q'(x')+ B' u,
\end{align}
where the last row defines $A', \bar G', q'(x')$ and $B'$.

By Proposition~\ref{prop:mainprop}, we have that  
$$\Span \{\bar G, A\bar G,\ldots\} \subseteq V$$ and $$\Span \{B, AB,\ldots\} \subseteq V.$$
It is then easy to see from the definition of $T$ that $[\bar G']_{k+1\cdots n}=0$,  $[B']_{k+1\cdots n}=0$ and that $A'$ has the block decomposition described in~\eqref{eq:canonicalformexplicit}. 

Finally, we have that 
$$
\frac{\partial q'(x')}{\partial [x']_j}=\frac{\partial q(x)}{\partial x}|_{x=Tx'}\frac{(\partial Tx')}{\partial [x']_j} =\frac{\partial q(x)}{\partial x}|_{x=Tx'}T_j 
$$
where $T_j$ is the $j$th column of $T$. From the definition of $T$, we see  that $V = \Span\{T_1,\ldots,T_k\}$. Thus, by definition of $V$ and Proposition~\ref{prop:mainprop}, it holds that
$$\frac{\partial q'(x')}{\partial x'_j}=\frac{\partial q}{\partial x}|_x T_j=0 \mbox{ for all }x \in \R^n \mbox{ and }1 \leq j \leq k,
$$ which implies that $q'(x')=q'(\bar x'_2)$. This concludes the proof. 
\end{proof}

\section{Summary and outlook}

Understanding which type of dynamics admit an embedding as a finite-dimensional affine system is an open problem with important ramifications. Indeed, such an embedding allows one to leverage methods from linear systems theory to control non-linear systems. This raises the equally important question of  whether systems admitting such an embedding, also called super-linearization, have a dynamics which is complex enough to warrant the effort of looking for one.  In this paper, we have taken the first step to address these issues by deriving a canonical form for some super-linearizable systems. 

Addressing the issue of super-linearization in general is a difficult task, and one has to understand first what characteristics of  super-linearizations are key to obtaining an amenable classification of super-linearizable systems. Based on our earlier work~\cite{belabbasobs2022}, we propose their number of visible observables as such a characteristic and, consequently,  have introduced here the class of {\em balanced} super-linearization with a unique visible observable. Note that this class does not restrict the total number of added observables. We have then shown that this class of systems admits a rather strong canonical form, which in essence shows that the nonlinear part of the dynamics depends on variables evolving autonomously and linearly. We will extend this approach to a larger class of system in future work.

\bibliographystyle{plain}
\bibliography{carleref.bib}
\end{document}